\newcommand\Def{\operatorname{Def}}
\newcommand\sign{\operatorname{sig}}
\newcommand\spnc{\operatorname{Spin}_{\bbC}}
\newcommand\fS{\mathfrak S}
\newcommand\cE{\mathcal E}
\newcommand\hH{\widehat{H}}
\newcommand\cC{\mathcal{C}}
\newcommand\cS{\mathcal{S}}
\newcommand\bZ{\overline{Z}}
\renewcommand\Im{\operatorname{Im}}
\newcommand\bbC{\mathbb C}
\newcommand\bbQ{\mathbb Q}
\newcommand\bbN{\mathbb N}
\newcommand\CI{{\mathcal C}^{\infty}}
\DeclareMathOperator{\Rind}{R-Ind}
\DeclareMathOperator{\Hom}{Hom}
\newtheorem{theorem}{Theorem}
\newtheorem{corollary}{Corollary}
\theoremstyle{definition}
\theoremstyle{remark}
\begin{document}

\title{Subelliptic $\spnc$ Dirac Operators, IV\\ Proof of the Relative Index Conjecture}

\author{Charles L. Epstein\footnote{Keywords: strictly pseudoconvex
    surface,  contact manifold, embeddable CR-structure, Stein filling, index
    formula, relative index conjecture, Stipsicz
    conjecture. Research partially supported by NSF grants
    DMS02-03795, DMS06-03973, and the Thomas A. Scott chair.
    E-mail: cle@math.upenn.edu} \\ Department of Mathematics\\
  University of Pennsylvania}

\date{Date: February 29, 2012}

\maketitle
\begin{abstract} We prove the relative index conjecture, which in turn implies
  that the set of embeddable deformations of a strictly pseudoconvex
  CR-structure on a compact 3-manifold is closed in the $\CI$-topology.
\end{abstract}

\section{Proof of the Relative Index Conjecture}
In this short paper, which continues the analysis presented
in~\cite{Epstein44}, we show how the formula for the relative index between two
Szeg\H o projectors $\cS_0,\cS_1,$ defined by two embeddable CR-structures on a
contact 3-manifold $(Y,H),$ gives a proof of the relative index conjecture:
\begin{theorem}\label{thm1} Let $(Y,H)$ be a compact 3-dimensional co-oriented, contact
  manifold, and let $\cS_0$ be the Szeg\H o projector defined by an
  embeddable CR-structure with underlying plane field $H.$ There is an
  $M$ such that for the Szeg\H o projector $\cS_1$ defined by any
  embeddable deformation of the reference structure with the same
  underlying plane field, we have the upper bound:
    \begin{equation}\label{UprBnd}
    \Rind(\cS_0,\cS_1)\leq M.
  \end{equation}
\end{theorem}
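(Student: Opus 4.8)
The plan is to read the bound off the relative index formula of \cite{Epstein44}, which rewrites the analytically defined integer $\Rind(\cS_0,\cS_1)$ as index-theoretic data of strictly pseudoconvex fillings of the two CR-structures. Since both structures are embeddable, I would first choose strictly pseudoconvex fillings $X_0$ and $X_1$ of them, taken smooth after resolving the bounding Stein spaces if necessary, which leaves $\pa X_j=Y$ untouched. Reversing the orientation of $X_0$ and gluing along the common boundary yields a closed four-manifold $\hX=X_1\cup_Y\bX_0$. The hypothesis that every admissible deformation carries the \emph{same} co-oriented plane field $H$ is exactly what makes this construction uniform: the two fillings then induce homotopic almost-complex structures on a collar of $Y$, so $\hX$ inherits a $\spnc$-structure whose germ along $Y$ is independent of the deformation.

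I would then apply the formula itself, which identifies $\Rind(\cS_0,\cS_1)$ with the index of the $\spnc$-Dirac operator $\ccD^+$ on $\hX$, up to a correction term determined solely by $(Y,H)$ and the fixed $\spnc$-germ near $Y$. The Atiyah--Singer theorem evaluates this index as $\tfrac18\bigl(c_1^2[\hX]-\sign(\hX)\bigr)$, where $c_1$ is the first Chern class of the $\spnc$ determinant line. Novikov additivity of the signature, together with the matching additivity of $c_1^2$ across the separating hypersurface $Y$ (the non-additivity defect being a fixed function of $(Y,H)$), splits the right-hand side into a contribution of $\bX_0$, constant because $\cS_0$ and $H$ are fixed, and a contribution of $X_1$. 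In this way \eqref{UprBnd} reduces to the single one-sided estimate
\begin{equation*}
  c_1(X_1)^2-\sign(X_1)\le C_1,
\end{equation*}
required to hold uniformly over \emph{all} strictly pseudoconvex fillings $X_1$ of the fixed contact manifold $(Y,H)$.

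The heart of the matter, and the step I expect to be the main obstacle, is this uniform bound: distinct embeddable deformations may bound fillings of arbitrarily large second Betti number, so one genuinely must control a characteristic number over an infinite family of topologically distinct manifolds. Two structural facts make it accessible. First, $c_1^2-\sign$ is a blow-up invariant, since a blow-up lowers both $c_1^2$ and $\sign$ by one, so it suffices to treat \emph{minimal} fillings. Second, the signature theorem with boundary gives $c_1^2-\sign=4\,b_2^+-2\,b_1+\delta$, with $\delta$ depending only on $(Y,H)$; as $b_1\ge 0$ this is at most $4\,b_2^++\delta$, so the estimate reduces to an upper bound on $b_2^+(X_1)$. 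The latter is supplied by the geometry of Stein fillings: such a filling has a handle decomposition with handles of index at most two, attached along Legendrian curves whose framings are forced negative by the Thurston--Bennequin inequality, and the resulting near-negativity of the intersection form, made precise through the adjunction inequality and its gauge-theoretic underpinnings, bounds $b_2^+$ in terms of $(Y,H)$ alone. This is the point at which the contact-topological input genuinely enters.

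Combining the two reductions bounds the $X_1$-contribution above by a constant depending only on $(Y,H)$; adding the fixed $\bX_0$- and boundary-terms produces the desired $M$ in \eqref{UprBnd}. I would close by underscoring that the same-plane-field hypothesis is used in two essential places, namely to make the glued $\spnc$-structure deformation-independent and to freeze the boundary correction terms, so that the entire dependence on the deformation is concentrated in the single filling invariant that the topological bound controls.
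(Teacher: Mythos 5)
There is a genuine gap, and it sits exactly where you predicted the main obstacle would be: the uniform topological estimate at the end. Your reduction has the inequality pointing the wrong way. In the relative index formula the filling $X_1$ of the deformed structure enters with a \emph{minus} sign,
\begin{equation*}
\Rind(\cS_0,\cS_1)=C_0-\dim H^{0,1}(X_1,J_1)-\frac{\sign[X_1]+\chi[X_1]}{4},
\end{equation*}
and since $\tfrac{\sign+\chi}{4}$ is (up to the boundary corrections you mention) your $\tfrac{c_1^2-\sign}{8}$, an upper bound on $\Rind(\cS_0,\cS_1)$ requires a \emph{lower} bound on $c_1(X_1)^2-\sign(X_1)$ over all Stein fillings, not the upper bound $c_1(X_1)^2-\sign(X_1)\le C_1$ that you write down. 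The estimate actually needed is essentially trivial: $\sign[X_1]+b_2(X_1)=2b_2^+(X_1)+b_2^0(X_1)\ge 0$ (because $|\sign[X_1]|\le\dim\hH^2(X_1)\le b_2(X_1)$ by Lefschetz duality), together with $\dim H^{0,1}(X_1)\ge 0$ and $b_1(X_1)\le b_1(Y)$, the last because a minimal resolution of the Stein filling is, after the Bogomolov--De Oliveira perturbation, homotopy equivalent to a $2$-complex. That is the entire proof in the paper; no control of $b_2^+$ from above is required.

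The estimate you do invoke --- a uniform upper bound on $b_2^+(X_1)$ over all Stein fillings of $(Y,H)$ --- is essentially the Ozbagci--Stipsicz conjecture, which the paper explicitly records as open outside special cases, and which it observes would be \emph{equivalent} to a two-sided bound on the relative index. Your appeal to Legendrian handle decompositions, the Thurston--Bennequin inequality and the adjunction inequality does not deliver it: what Stipsicz proves along those lines is $b_2^0(X_1)\le b_1(Y)$ and a linear inequality bounding $b_2^-$ in terms of $b_2^+$, not an absolute bound on $b_2^+$. So, as written, your argument reduces the theorem to an open problem. The earlier portion of your proposal (gluing the two fillings, Atiyah--Singer, Novikov additivity) is in effect a sketch of the proof of the relative index formula itself, which the paper takes as input from the previous installment; that scaffolding is reasonable, but the argument only closes once you notice that the $X_1$-contribution enters with the sign that makes the needed inequality the easy one.
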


Recall that the deformations of a reference CR-structure, $T^{0,1}_bY,$  on $(Y,H)$
are parameterized by
\begin{equation}
  \Def(Y,H,\cS_0)=\{\Phi\in \CI(Y;\Hom(T^{0,1}_bY,T^{1,0}_bY))
:\:\|\Phi\|_{L^{\infty}}<1\},
\end{equation}
via the prescription:
\begin{equation}
  {}^{\Phi}T^{0,1}_{b,y}Y=\{\bZ_y+\Phi_y(\bZ_y):\: \bZ_y\in T^{0,1}_{b,y}Y\}.
\end{equation}
Here and in the sequel we often use the Szeg\H o projector to label a CR-structure.
Let $\cE\subset\Def(Y,H,\cS_0)$ consist of the embeddable deformations, that
is, CR-structures arising as pseudoconvex boundaries of complex surfaces.
In~\cite{Epstein} we showed that if $\cS_0$ is Szeg\H o projector defined by
the reference CR-structure and $\cS_1$ that defined by an embeddable
deformation, then the map
\begin{equation}
  \cS_1:\Im \cS_0\longrightarrow \Im \cS_1
\end{equation}
is a Fredholm operator. $\Rind(\cS_0,\cS_1)$ denotes its
Fredholm index, which we call the \emph{relative index}.  In the proof
of Theorem E in~\cite{Epstein} we showed that, for each
$m\in\bbN\cup\{0\}$ and any $\delta>0,$ the subsets of
$\Def(Y,H,\cS_0)$ given by
\begin{equation}
  \fS^{\delta}_m=\{\cS_1\in\Def(Y,H,\cS_0):\: -\infty<\Rind(\cS_0,\cS_1)\leq m\}\text{
      and } \|\Phi\|^2_{L^{\infty}}\leq \frac 12-\delta,
\end{equation}
are closed in the $\CI$-topology.  In fact, we show that there is an
integer $k_0,$ so that this conclusion holds for a sequence $<\Phi_n>$
converging to $\Phi$ in the $\cC^{k_0}$-norm. 

Combining~\eqref{UprBnd} with Theorem E of~\cite{Epstein} we prove:
\begin{corollary} Under the hypotheses of Theorem~\ref{thm1}, the
  set of embeddable deformations of the CR-structure on $Y$ is
  closed in the $\CI$-topology.
\end{corollary}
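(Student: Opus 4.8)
The plan is to obtain the Corollary by combining the uniform upper bound of Theorem~\ref{thm1} with the closedness of the sublevel sets $\fS^\delta_m$ furnished by Theorem E of~\cite{Epstein}. Since $\CI(Y;\Hom(T^{0,1}_bY,T^{1,0}_bY))$ is a Fréchet space, the $\CI$-topology on $\Def(Y,H,\cS_0)$ is metrizable, so closedness of $\cE$ may be checked sequentially: given $<\cS_n>\subset\cE$ with $\cS_n\to\cS_\infty$ in $\CI$, I must show $\cS_\infty\in\cE$. Throughout I use the characterization, established in~\cite{Epstein}, that a deformation is embeddable if and only if its relative index against an embeddable reference is finite (that is, exceeds $-\infty$); the goal is therefore to produce an embeddable reference structure against which $\cS_\infty$ has finite relative index.

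The essential device is a change of basepoint. Theorem E controls only deformations lying in the ball $\|\Phi\|^2_{L^{\infty}}\leq\frac12-\delta$, whereas the limiting tensor $\Phi_\infty$ may have $L^{\infty}$-norm close to $1$, so $\cS_0$ itself need not be a usable reference for $\cS_\infty$. Instead I fix a large index $n_0$ and take the embeddable structure $\cS_{n_0}$, which shares the plane field $H$, as a new reference; it is an admissible basepoint for both Theorem~\ref{thm1} and Theorem E. The deformation tensor $\Phi'_n$ of $\cS_n$ relative to $\cS_{n_0}$ is a fiberwise algebraic function of $\Phi_n$ and $\Phi_{n_0}$ which vanishes when the two structures coincide; since $\Phi_n,\Phi_{n_0}\to\Phi_\infty$ in $\CI$, choosing $n_0$ and then $n$ large makes $\|\Phi'_n\|_{L^{\infty}}$ as small as I please. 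I may thus fix $\delta>0$ with $\|\Phi'_n\|^2_{L^{\infty}}\leq\frac12-\delta$ for all sufficiently large $n$.

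With $\cS_{n_0}$ as reference, Theorem~\ref{thm1} supplies a single constant $M'$, depending on $\cS_{n_0}$ but not on $n$, with $\Rind(\cS_{n_0},\cS_n)\leq M'$ for every embeddable $\cS_n$. This uniformity is precisely what is needed: each $\fS^\delta_m$ is closed only for a fixed $m$, and the union over $m$ need not be closed, so without a common upper bound one could not trap the whole tail of the sequence inside one closed set. Here, however, every sufficiently large $\cS_n$ lies in the single set $\fS^\delta_{M'}$ formed relative to $\cS_{n_0}$. By Theorem E this set is closed in the $\CI$-topology, so its limit $\cS_\infty$ lies in $\fS^\delta_{M'}$ as well; in particular $-\infty<\Rind(\cS_{n_0},\cS_\infty)$. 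By the embeddability characterization, $\cS_\infty\in\cE$, which proves the Corollary.

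I expect the change of basepoint to be the main obstacle: one must check that moving the reference from $\cS_0$ to the nearby embeddable $\cS_{n_0}$ preserves finiteness of the relative index and keeps the relevant deformation tensors inside the required ball, and that Theorem~\ref{thm1} truly delivers a bound $M'$ uniform over the tail (dependence on $\cS_{n_0}$ is harmless, dependence on $n$ would be fatal). Granting this, the rest follows formally from metrizability of the $\CI$-topology together with Theorems~\ref{thm1} and E.
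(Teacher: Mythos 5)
Your proposal is correct and takes essentially the same route as the paper: both re-base at an embeddable structure deep in the sequence (your $\cS_{n_0}$, the paper's $\cS_N$) using the fractional-linear composition formula for deformation tensors so that the tail and the limit lie in a small $L^{\infty}$-ball about the new reference, then apply Theorem~\ref{thm1} with that reference to get a uniform upper bound $M$ on $\Rind(\cS_{n_0},\cS_n)$, and conclude by the closedness of the set $\fS^{\delta}_{M}$ from Theorem E. The only cosmetic difference is that you phrase the last step via an ``embeddable iff finite relative index'' characterization, where the paper simply cites Theorem E as delivering embeddability of the limit directly.
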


\begin{proof}[Proof of the Corollary] Suppose that $<\Phi_n>$ is a
  sequence of embeddable deformations in $\cE\subset\Def(Y,H,\cS_0)$
  converging to $\Phi\in \Def(Y,H,\cS_0),$  in the $\CI$-topology. We
  first  observe that $\|\Phi\|_{L^{\infty}}<1.$ 

Let $\Psi_1$ and $\Psi_2$ be deformations of the reference
  structure, with local representations
  \begin{equation}
    \Psi_j=\psi_jZ\otimes\bar{\omega}.
  \end{equation}
The local representation of $\Psi_2$ as a deformation of $\Psi_1$ is given by
\begin{equation}\label{eqn12}
  \psi_{21}=\frac{\psi_2-\psi_1}{1-\overline{\psi_1}\psi_2},
\end{equation}
see equation (5.5) in~\cite{Epstein}[I].  We can represent $\Phi$ as a
deformation of any of the structures in the sequence. From
equation~\eqref{eqn12} it is clear that there an integer $N$ so that,
as deformations of $\Phi_N,$ a tail of the sequence and its limit lie
in the $L^{\infty}$-ball in $\Def(Y,H,\cS_N),$ centered at $0,$ of
radius $\frac 14.$ Theorem~\ref{thm1} shows that there is an $M$ so
that
  \begin{equation}
    \Rind(\cS_N,\cS_n)\leq M,\text{ for all }n\in\bbN.
  \end{equation}
Theorem E from~\cite{Epstein} then implies that the limiting structure $\Phi$
is also embeddable, completing the proof of the corollary.
\end{proof}

Before proving Theorem~\ref{thm1} we recall the formula for the relative index proved in~\cite{Epstein44}:
\begin{theorem}\label{thm13} Let $(Y,H)$ be a compact 3-dimensional co-oriented, contact
  manifold, and let $\cS_0,\cS_1$ be Szeg\H o projectors for embeddable
  CR-structures with underlying plane field $H.$ Suppose that $(X_0,J_0),$
  $(X_1,J_1)$ are strictly pseudoconvex complex manifolds with boundaries
  $(Y,H,\cS_0),$ $(Y,H,\cS_1),$ respectively, then
\begin{equation}
\begin{split}
\Rind(\cS_0,\cS_1)=&\dim H^{0,1}(X_0,J_0)-\dim
H^{0,1}(X_1,J_1)+\\
&\frac{\sign[X_0]-\sign[X_1]+\chi[X_0]-\chi[X_1]}{4}.
\end{split}
\label{eqn2}
\end{equation}
\end{theorem}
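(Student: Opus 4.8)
The identity to be proved equates an analytic quantity, the Fredholm index of $\cS_1\colon\Im\cS_0\to\Im\cS_1$, with topological data attached to two Stein fillings. The plan is to compute each side through the $\spnc$ Dirac operator $D_j=\dbar_j+\dbar_j^*$ on $X_j$, subject to the subelliptic ($\dbar$-Neumann--type) boundary condition attached to the Szeg\H o projector $\cS_j$, whose Fredholm theory and index were developed in Parts I--III of this series. The conceptual engine is the pointwise Riemann--Roch (Noether) identity for a closed complex surface, $\chi(\cO)=\tfrac{\sign+\chi}{4}$, which at the level of index densities reads $\Td_2=\tfrac14\bigl(L_1+e\bigr)$, where $L_1=p_1/3$ is the Hirzebruch form and $e$ the Euler form; this is precisely the source of the combination $\tfrac14(\sign+\chi)$ appearing in~\eqref{eqn2}.

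The first step is to recast~\eqref{eqn2} as the statement that
\[
  \mu(X):=\dim H^{0,1}(X)+\frac{\sign[X]+\chi[X]}{4}
\]
depends only on the boundary data $(Y,H,\cS)$ and not on the interior of the filling $X$, since~\eqref{eqn2} is exactly $\Rind(\cS_0,\cS_1)=\mu(X_0)-\mu(X_1)$. To establish this filling-independence I would apply the Atiyah--Patodi--Singer theorem on $X$ to $D$, and separately to the signature and Gauss--Bonnet--Chern operators. On the one hand, APS for the Dolbeault complex gives $\chi(\cO_X)=\int_X\Td-\tfrac12(\eta_{\dbar}+h_{\dbar})$; for a connected Stein filling $\dim H^{0,0}=1$ and $\dim H^{0,2}=0$, so the index equals $1-\dim H^{0,1}(X)$. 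On the other hand, the density identity gives $\int_X\Td=\tfrac14\bigl(\int_XL_1+\int_Xe\bigr)$, while the APS signature theorem and the Gauss--Bonnet--Chern formula with boundary express $\int_XL_1$ and $\int_Xe$ as $\sign[X]$ and $\chi[X]$ plus boundary eta and transgression terms. Combining these, the interior integrals cancel and one finds
\[
  \mu(X)=1-\tau(Y,H,\cS),
\]
where $\tau$ is a sum of boundary spectral invariants built from the metric and complex structure induced on a collar of $Y$; in particular $\mu(X)$ is manifestly insensitive to the interior.

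It then remains to identify the difference of boundary invariants with the analytic relative index, i.e.\ to prove $\Rind(\cS_0,\cS_1)=\tau(Y,H,\cS_1)-\tau(Y,H,\cS_0)$. Here the key point is that $\cS_0$ and $\cS_1$ are order-zero projections in the extended Heisenberg calculus whose principal symbols are governed by the \emph{same} contact structure $H$; because $\|\Phi\|_{L^\infty}<1$, the two CR-structures are joined by the straight-line path $t\mapsto t\Phi$, along which the Szeg\H o projectors vary continuously in that calculus. I would compute $\Rind(\cS_0,\cS_1)$ as the relative index of the two generalized Szeg\H o (Calder\'on) projections, using the comparison (Agranovich--Dynin) formalism adapted to the Heisenberg calculus in the earlier parts of the series, and match it term by term with the variation of $\tau$ along the path; the boundary eta-type pieces insensitive to the CR-structure (depending only on $H$) drop out of the difference.

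The main obstacle is this last identification. The subtlety is that $\tau$ is a genuinely non-local spectral quantity while $\Rind$ is defined by a Fredholm pairing of two projections; bridging them requires the full structure of the extended Heisenberg parametrices together with a careful spectral-flow argument showing that, as the CR-structure is deformed, the jumps in $\dim\Ker$ and $\dim\coker$ of $\cS_t\colon\Im\cS_0\to\Im\cS_t$ are exactly accounted for by the change in the boundary contribution to the APS formula. Controlling this along the whole deformation---in particular handling the non-embeddable intermediate structures, for which no filling exists and the holomorphic terms are undefined---is where the real work lies, and is the reason the argument must be carried out entirely on the boundary in the Heisenberg calculus rather than by a naive gluing of the two fillings, which, as the additivity $\chi(X_0\cup_Y\overline{X_1})=\chi[X_0]+\chi[X_1]$ already shows, produces the wrong topological combination.
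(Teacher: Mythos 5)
You should first be aware that this paper contains no proof of Theorem~\ref{thm13}: it is explicitly recalled from Part III of the series, \cite{Epstein44}, so your proposal has to be measured against the proof given there. That proof shares your starting point---the relative index is converted into a difference of indices of boundary value problems for $\dbar+\dbar^*$ on the two fillings---but the boundary conditions used are the \emph{subelliptic} (modified $\dbar$-Neumann) conditions defined by the Szeg\H o projectors themselves in the Heisenberg calculus, not APS spectral conditions. That choice is what makes the two hard steps in your outline tractable: the kernel and cokernel of the subelliptic problem on a strictly pseudoconvex surface are identified directly with Dolbeault groups, which is where $\dim H^{0,1}(X_j)$ comes from, and the comparison between the two fillings is carried out by an excision/gluing argument in which the difference of indices becomes the index of a $\spnc$ Dirac operator on the closed manifold $X_0\cup_Y\overline{X_1}$, computed by Atiyah--Singer. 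Your parenthetical dismissal of gluing is therefore aimed at the method that actually succeeds, and the objection itself is mistaken: the index density for the $\spnc$ Dirac operator is the Todd form, not the Euler form, and the relevant characteristic numbers are orientation-sensitive. Novikov additivity gives $\sign[X_0\cup_Y\overline{X_1}]=\sign[X_0]-\sign[X_1]$, and the Euler characteristic enters only through the Noether-type identity $c_1^2=2\chi+3\sign$ applied to each piece separately; this is how the combination $\frac{\sign+\chi}{4}$ acquires opposite signs on the two fillings, even though $\chi$ itself adds under gluing.

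Measured on its own terms, your proposal has two genuine gaps. First, the claim that the APS index of the Dolbeault complex on a Stein filling equals $1-\dim H^{0,1}(X)$ because ``$\dim H^{0,0}=1$ and $\dim H^{0,2}=0$'' is unjustified and its stated justification is false: for a compact $X$ with strictly pseudoconvex boundary, $H^{0,0}(X)$ is the space of holomorphic functions smooth up to the boundary, which is infinite dimensional. The degree-zero kernel of the APS problem is the subspace of holomorphic functions whose boundary values lie in a spectral subspace of the boundary operator, and identifying it---and, more seriously, relating the full APS index to the Dolbeault group $H^{0,1}(X)$ appearing in the theorem---is a substantial theorem, not a formal consequence of Steinness; in \cite{Epstein44} the analogous identification is proved for the subelliptic condition, and it is a large part of that paper. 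Second, and decisively, the step you defer to the end, namely $\Rind(\cS_0,\cS_1)=\tau(Y,H,\cS_1)-\tau(Y,H,\cS_0)$, is not a ``last identification'' but is precisely the content of the theorem: the Szeg\H o projectors are Heisenberg-calculus projections differing from the APS and Calder\'on projections by operators of nontrivial relative index, and the straight-line path $t\Phi$ along which you propose to run a spectral-flow argument passes through CR-structures that need not be embeddable, where there is no Szeg\H o projector onto CR functions with stable Fredholm properties and no filling to anchor $\tau$. Since you acknowledge this step is open, the proposal as written assumes the conclusion at exactly the point where the real difficulty sits, and it cannot be counted as a proof.
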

\noindent
Here $\sign[X]$ is the signature of the non-degenerate quadratic form,
\begin{equation}\label{eqn7}
  ([\alpha],[\beta])\mapsto\int_{X}\alpha\wedge\beta,
\end{equation}
defined for $[\alpha],[\beta]\in \hH^2(X),$ the image of $H^2(X,bX)$ in
$H^2(X),$ and $\chi[X]$ is the topological Euler characteristic:
\begin{equation}
  \chi[X]=\sum_{j=0}^4b_j(X)(-1)^j,\text{ where }b_j(X)=\dim H_j(X;\bbQ).
\end{equation}

\begin{proof}[Proof of Theorem~\ref{thm1}] 
  Let $X_1$ be a minimal resolution of the normal Stein space with
  boundary $(Y,H,\cS_1).$ It follows from a theorem of Bogomolov and
  De Oliveira that there is a small perturbation of the complex
  structure on $X_1$ making it into a Stein manifold,
  see~\cite{BogomolovDeOliveira}. Hence it follows that $X_1,$ with a
  deformed complex structure, has a strictly plurisubharmonic
  exhaustion function, and therefore $X_1$ has the homotopy type of a
  2-dimensional CW-complex. Thus expanding the formula in~\eqref{eqn2}
 gives:
\begin{equation}
\Rind(\cS_0,\cS_1)=C_0-\dim
H^{0,1}(X_1,J_1)-\frac{\sign[X_1]+1-b_1(X_1)+b_2(X_1)}{4},
\label{eqn3}
\end{equation}
where $C_0$ denotes the contribution of the terms from the reference structure:
\begin{equation}
  C_0=H^{0,1}(X_0,J_0)+\frac{\sign[X_0]+\chi(X_0)}{4}.
\end{equation}
The fact that $X_1$ is homotopic to a 2-complex implies that
$b_1(X_1)\leq b_1(Y),$ see~\cite{stipsicz}. As $\sign[X_1]$ is the
signature of the cup product pairing on $\hH^2(X_1),$ it is evident
that
\begin{equation}
  |\sign[X_1]|\leq \dim \hH^2(X_1)\leq\dim H^2(X_1,bX_1)= b_2(X_1).
\end{equation}
The last equality is a consequence of the Lefschetz duality theorem.
Hence 
$0\leq b_2(X_1)+\sign[X_1],$
and therefore
\begin{equation}
  \Rind(\cS_0,\cS_1)\leq C_0+\frac{b_1(Y)-1}{4}.
\end{equation}
This completes the proof of the theorem.
\end{proof}

\noindent
{\bf Remarks on the Ozbagci-Stipsicz Conjecture:}\,
Note that 
$$\sign[X_1]+b_2(X_2)=2b^+_2(X_1)+b^0_2(X_1),$$
where $b^+_2(X_1)$ is the dimension of the space on which the pairing
in~\eqref{eqn7} is positive and $b^0_2(X_1)$ is the dimension of the
kernel of the map $H^2(X_1,bX_1)\to H^2(X_1).$  A global bound
on $|\Rind(\cS_0,\cS_1)|,$ among all Szeg\H o projectors $\cS_1$
defined by elements of $\cE,$ is therefore equivalent to an upper
bound for $b^+_2(X_1)+b^0_2(X_1)+\dim H^{0,1}(X_1),$ among all Stein
spaces, $X_1$ filling $(Y,H).$ The existence of an upper bound on
$b^+_2(X_1)+b^0_2(X_1)$ was conjectured by Ozbagci and Stipsicz, and
proved in some special cases, see~\cite{stipsicz}.

The fact, proved in~\cite{Epstein}, that $\Rind(\cS_0,\cS_1)\geq 0,$
for sufficiently small deformations shows that, for such deformations:
\begin{multline}\label{eqn16.1}
  \dim H^{0,1}(X_1)+\frac{2b_2^+(X_1)+b_2^0(X_1)}{4}\leq\\
  \dim H^{0,1}(X_0)+\frac{2b_2^+(X_0)+b_2^0(X_0)+b_1(Y)-b_1(X_0)}{4}.
\end{multline}
In~\cite{stipsicz} Stipsicz shows that for any Stein filling of $(Y,H),$
we have the estimate $b_2^0(X_1)\leq b_1(Y),$ as well as the existence of a
constant $K_{(Y,H)}$ so that
\begin{equation}
  b_2^{-}(X_1)\leq 5b_2^+(X_1)+2-K_{(Y,H)}+2b_1(Y).
\end{equation}
These estimates, along with~\eqref{eqn16.1} prove a ``germ'' form of the
Ozbagci--Stipsicz conjecture: among sufficiently small, embeddable
deformations of the CR-structure on the boundary of a strictly
pseudoconvex surface, the set of numbers
$$\{b_1(X_1),\sigma(X_1),\chi(X_1)\}$$ 
is finite. The notion of smallness here depends in a complicated way
on the reference CR-structure.

Our results suggest a strategy for proving a lower bound on
$\Rind(\cS_0,\cS_1),$ among deformations $\Phi$ with
$\|\Phi\|_{L^{\infty}}<1-\epsilon,$ for an $\epsilon>0.$ Suppose that
no such bound exists, one could then choose a sequence
$<\Phi_n>\,\subset\cE$ for which $\Rind(\cS_0,\cS_n)$ tends to
$-\infty.$ A contradiction would follow immediately if we could show
that $<\Phi_n>$ is bounded in the $\cC^{k_0+1}$-norm.

While such an \emph{a priori} bound seems unlikely for the original
sequence, it would suffice to replace the sequence $<\Phi_n>$ with a
``wiggle-equivalent'' sequence. Let $M_n$ denote a projective surface
containing $(Y,{}^{\Phi_n}T^{0,1}_bY)$ as a separating hypersurface,
see~\cite{Lempert}. An equivalent sequence with better regularity
might be obtained by wiggling the hypersurfaces defined by
$(Y,{}^{\Phi_n}T^{0,1}_bY)$ within $M_n,$ perhaps using some sort of
heat-flow. After composing the resultant deformations with contact
transformations, we might be able to obtain a sequence $<\Phi'_n>$
with $\Rind(\cS_0,\cS_n')=\Rind(\cS_0,\cS_n)$ that does satisfy an
\emph{a priori} $\cC^{k_0+1}$-bound. Such an argument would seem to
require an improved understanding of the metric geometry of
$\Def(Y,H,\cS_0),$ as well as the relationship of an abstract
deformation to the local extrinsic geometry of $Y$ as a hypersurface
in $M_n.$

{\small \centerline{Acknowledgment} I would like to thank Sylvain
  Cappell for helping me with some topological calculations, and the
  Courant Institute for their hospitality during the completion of
  this work.}

\end{document}